\def\Dio{{\rm dio}}
\newtheorem{theo}{Theorem}
\newtheorem{prop}[theo]{Proposition}
\newtheorem{coro}[theo]{Corollary}
\theoremstyle{definition}
\theoremstyle{remark} 
\newtheorem{rem}[theo]{Remark}
\theoremstyle{definition}
\newcommand{\cal}{\mathcal}
\title{On the expansion of some exponential periods in an integer base}\author[B.~Adamczewski]{Boris Adamczewski}
\address{CNRS, Universit\'e de Lyon, Universit\'e Lyon 1 \\ 
Institut Camille Jordan \\ 
43 boulevard du 11 novembre 1918  \\
69622 Villeurbanne cedex France}
\email{Boris.Adamczewski@math.univ-lyon1.fr}
\urladdr{http://math.univ-lyon1.fr/~adamczew/}
\begin{document}

\begin{abstract}
We derive a lower bound for the subword complexity of the base-$b$ expansion ($b\geq 2$) 
of all real numbers whose irrationality exponent is equal to $2$.  
This provides a generalization of a 
theorem due to Ferenczi and Mauduit. 
As a consequence, we obtain the first lower bound for 
the subword complexity of  the number $e$ and 
of some other transcendental exponential periods.  
\end{abstract}

\maketitle

\section{Introduction}

The decimal expansion of real numbers like $\sqrt 2$, $\pi$ or $e$ 
appears to be quite mysterious and, for a long time, has baffled  
mathematicians. While  numerical observations seem to speak in favour of a  
complex structure, most questions one may imagine to ask about the decimal expansion 
of classical irrational constants turn out to be out of reach.  

Kontsevitch and Zagier \cite{KZ} offered a promising framework to try to 
distinguish usual constants from other real numbers by introducing the notions of {\it period} and 
of {\it exponential period}. Algebraic numbers, $\pi$, $\log 2$ and $\zeta(3)$ 
are periods, while $e$ is conjecturally not a period. 
However, $e$ is a typical example of an exponential period. 
Exponential periods form a countable set that contains the set of periods. 
We refer the reader to \cite{KZ} for exact definitions and more results 
about both notions, but to paraphrase these authors, all classical contants 
are periods in an appropriate sense. Folklore suggests that all irrational 
periods are {\it normal numbers}.   
Recall that a real number is a normal number if 
for every integer $b\geq 2$ and every positive integer $n$,
each one of the $b^n$ blocks of length $n$
over the alphabet $\{0, 1, \ldots, b-1\}$
occurs in its base $b$ expansion with frequency $1/b^n$. 
This notion was introduced in 1909 by Borel \cite{Bo} who 
proved that almost all numbers, with respect to the Lebesgue measure,   
are normal despite the uncomfortable fact  
that not a single natural example of a normal number is known. 

\medskip

An interesting (and perhaps more reasonable) way to tackle problems concerned with 
the expansions of classical constants in integer bases  
is to consider the {\it subword complexity} of real numbers. 
Let $\xi$ be a real number and $b\geq 2$ be a positive integer. Then $\xi$ has a unique expansion 
in the base $b$, that is, there exists a unique sequence ${\bf a}=(a_n)_{n\geq -k}$ with values 
in $\{0,1,\ldots,b-1\}$ such that 
$$\begin{array}{rl}
\xi=& \displaystyle\sum_{n\geq -k} \frac{a_n}{b^n} \\ 
:= &a_{-k}a_{-k+1}\cdots a_{-1}a_0{\scriptscriptstyle \bullet} a_1a_2\cdots
\end{array}$$
The complexity function of $\xi$ with respect to the base $b$ is the function that associates  
with each positive integer $n$ the positive integer  
$$
p(\xi,b,n) := \mbox{Card} \{(a_j,a_{j+1},\ldots, a_{j+n-1}), \; j \geq 1\}.
$$
A normal number thus has the maximum possible complexity in every integer base, 
that is, $p(\xi,b,n) = b^n$ for every 
positive integer $n$ and every integer $b\geq 2$. As mentioned before, one usually  expects 
such a high complexity for numbers like $\sqrt 2$, $\pi$ and $e$.  
This problem was first addressed in 1938 by Hedlund and Morse \cite{HM}.

\medskip

In their paper, Hedlund and Morse obtained a fundamental result that can be restated as follows.

\medskip

\noindent
{\bf\itshape Theorem HM.} --- 
{\it Let $b\geq 2$ be an integer and $\xi$ be a real number. 
Then $\xi$ is rational if and only if it has a bounded complexity function. 
Furthermore, if $\xi$ is irrational, 
its complexity function is increasing and thus 
$$
p(\xi,b,n) - n \geq 1, \;\; \forall n\geq 1.
$$
}

\medskip

To find lower bounds for the complexity function of algebraic irrational numbers is a challenging 
problem. In 1997, Ferenczi and Mauduit \cite{FM} proved the theorem below.  
Actually, their result is slightly weaker and 
the present statement is given according to a clever remark of Allouche 
outlined in \cite{Al}. 

\medskip

\noindent
{\bf\itshape Theorem FM.} --- 
{\it Let $b \geq 2$ be an integer and $\xi$ be an algebraic irrational number.  
Then, 
\begin{equation}\label{bfm}
\lim_{n\to \infty}  p(\xi,b,n) - n = + \infty.
\end{equation}}

\medskip

The proof of Theorem FM mixes techniques from combinatorics on words 
and Diophantine approximation. The main ingredient is a $p$-adic version 
of Roth's theorem due to Ridout \cite{Rid57}.  
Recently, Bugeaud and the author \cite{AB1} (see also \cite{ABL}) improved Theorem FM 
by means of the Schmidt Subspace Theorem. Under the same assumption, 
these authors proved that (\ref{bfm}) can be replaced by 
$$
\lim_{n\to \infty} \frac{p(\xi,b,n)}{n} = + \infty.
$$

\medskip

The situation regarding transcendental constants is even worse: 
apparently, there is not a single trancendental exponential period for which 
one knows an improvement of the lower bound given by Theorem HM. Of course, 
one could choose an exponential period $\xi$ and compute the first 
digits of its base-$b$ expansion. If one is lucky enough, one will find occurrences of 
many different blocks of digits of a given length. 
But, this would only lead, for some integer $k$, to a lower bound of the type 
$$
p(\xi,b,n) - n \geq k,   
$$ 
for all sufficiently large positive integers $n$.

\medskip

 Recall that the {\it irrationality exponent} of an irrational number $\xi$, denoted by $\mu(\xi)$, 
 is defined as the supremum of the real numbers $\rho$ for which the inequality 
$$
\left\vert \xi - \frac{p}{q} \right\vert < \frac{1}{q^{\rho}}
$$
has infinitely many different rational solutions $p/q$. 
It always satifies 
$$
2 \leq \mu(\xi) \leq +\infty.
$$
The set of real numbers whose irrationality exponent is equal to $2$ has full Lebesgue measure.  
By Roth's theorem \cite{Roth}, algebraic irrational numbers all have an irrationality exponent 
equal to $2$.   
The aim of this note is to generalize Theorem FM as follows.

\begin{theo}\label{main}
Let $b\geq 2$ be an integer and $\xi$ be an irrational real number such that 
$\mu(\xi)=2$. Then,
\begin{equation}\label{complex}
\lim_{n\to \infty} p(\xi,b,n) - n = + \infty.
\end{equation}
\end{theo}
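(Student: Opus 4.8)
The plan is to argue by contraposition. Suppose that \eqref{complex} fails; since the complexity function is nondecreasing and, by Theorem HM, the irrationality of $\xi$ forces it to be strictly increasing, this means $\liminf_{n}\bigl(p(\xi,b,n)-n\bigr)<+\infty$, so there is an integer $k$ with $p(\xi,b,n)\le n+k$ for infinitely many $n$. Write $\mathbf a=(a_j)_{j\ge 1}$ for the tail of the base-$b$ expansion and set $s(m)=p(\xi,b,m+1)-p(\xi,b,m)\ge 1$. For any $N$ with $p(\xi,b,N)\le N+k$ one has $\sum_{m=1}^{N-1}(s(m)-1)=p(\xi,b,N)-p(\xi,b,1)-(N-1)\le k-1$ (using $p(\xi,b,1)\ge 2$, as $\xi$ is irrational), so $s(m)=1$ for all but at most $k-1$ indices $m<N$. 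Thus, at each such scale $N$, the expansion is quasi-Sturmian away from a bounded number of ``branching'' lengths.

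The core combinatorial step is to convert a long run of lengths carrying $s(m)=1$ into a repetition located near the beginning of the expansion. First I would show, by a pigeonhole argument on the $\le k-1$ branching indices, that for infinitely many $N$ there is an interval $[m_0,m_1]$ on which $s\equiv 1$ and whose length is arbitrarily large compared with its left endpoint: fixing any $c$ in advance, once $N$ exceeds $(c+1)^{k}$ one can guarantee $m_1-m_0\ge c\,m_0$. On such an interval the Rauzy graphs have a single nontrivial cycle, and the analysis of Sturmian prefixes (standard words $s_n=s_{n-1}^{a_n}s_{n-2}$, whose initial powers already have exponent bounded below by the extremal, Fibonacci, value $\varphi^{2}>2$) yields a factor $U V^{\omega}$ occurring at the start of the interval, with $|V|$ comparable to $m_1-m_0$, with $|U|$ of order $m_0$, and with $\omega\ge 2+\eps_0$ for a fixed $\eps_0>0$.

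Finally I would translate each such repetition into a rational approximation. The number $r$ whose base-$b$ expansion is eventually periodic with preperiod $a_1\cdots a_{|U|}$ and period the block $V$ satisfies $r=p/q$ with $q\asymp b^{|U|+|V|}$ and $\vert\xi-r\vert\le b^{-(|U|+\omega|V|)}$, whence $\vert\xi-r\vert\le q^{-\rho}$ with $\rho=(|U|+\omega|V|)/(|U|+|V|)$. Taking $c$ large forces $|U|/|V|$ small, so $\rho\ge 2+\eps_0/2$; since $|V|\to\infty$ these rationals are pairwise distinct and infinitely many, contradicting $\mu(\xi)=2$. The delicate point — and the place where the weaker hypothesis $\mu(\xi)=2$, rather than algebraicity (which let Ferenczi and Mauduit invoke Ridout's $p$-adic theorem), must be paid for — is the combinatorial lemma producing, from $s\equiv 1$ on a single long interval, a repetition whose exponent exceeds $2$ by a fixed amount \emph{and} whose starting position is negligible compared with its period. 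Controlling the depth of the repetition, not merely its existence, is the heart of the matter, since an approximation coming from a deep repetition has Archimedean exponent tending to $1$ and is useless here.
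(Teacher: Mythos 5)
Your proposal follows the same skeleton as the paper's actual proof: reduce to the quasi-Sturmian situation $p(\xi,b,n)=n+k$ for all large $n$ (your ``branching index'' count is just a hands-on version of this step), extract arbitrarily long prefixes of the expansion of the form $UV^{\omega}$ with $\vert UV^{\omega}\vert/\vert UV\vert\geq 2+\varepsilon$, and convert each repetition into a rational $p/q$ with $q=b^{\vert U\vert}(b^{\vert V\vert}-1)$ and $\vert \xi-p/q\vert<q^{-(2+\varepsilon')}$, contradicting $\mu(\xi)=2$. Your pigeonhole argument producing branching-free windows $[m_0,m_1]$ with $m_1-m_0\geq c\,m_0$ is correct, and your final approximation step is exactly the paper's inequality $\Dio({\bf a})\leq\mu(\xi)$ from Section 3.

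The genuine gap is the step you yourself call ``the heart of the matter'' and then dispatch in one sentence: the claim that inside such a window the word carries a repetition $UV^{\omega}$ with $\vert U\vert=O(m_0)$, $\vert V\vert$ comparable to $m_1-m_0$, and $\omega\geq 2+\varepsilon_0$ for a fixed $\varepsilon_0>0$, justified by ``the analysis of Sturmian prefixes (standard words $s_n=s_{n-1}^{a_n}s_{n-2}$)''. The standard-word recursion governs only \emph{characteristic} Sturmian words (intercept zero). By Cassaigne's theorem the word ${\bf a}$ has the form $W\varphi({\bf s})$ where ${\bf s}$ is a Sturmian word with \emph{arbitrary} intercept, and for arbitrary intercepts the initial repetitions are far subtler: this is precisely the content of the Berth\'e--Holton--Zamboni results that the paper invokes ``in an essential way''. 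Their bound, $\mathrm{ice}({\bf s})\geq 2+\frac{1}{2(M+1)^2+1}$ when the partial quotients of the slope are bounded by $M$, degrades to $2$ as $M\to\infty$, so no uniform $\varepsilon_0$ comes out of it; and when the slope has unbounded partial quotients the initial critical exponent can fail to exceed $2$ altogether (one can choose the intercept so that the word begins in no long square), which is why the paper must switch to a different mechanism there, namely $\Dio({\bf s})=+\infty$ (Proposition 11.1 of \cite{AB2}). Your uniform constant $\varepsilon_0=\varphi^2-2$ is not a theorem: the Fibonacci word is extremal among \emph{characteristic} Sturmian words, not among all Sturmian words, and the assertion that every such word has Diophantine exponent at least $\varphi^2=(3+\sqrt 5)/2$ is exactly the paper's Remark 6, stated there as likely but open. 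What elementary Rauzy-graph or rotation arguments do give in your window is squares -- exponent exactly $2$, with period comparable to the window length -- and, as you correctly observe, exponent $2$ yields approximations whose exponent tends to $2$ and is useless against the hypothesis $\mu(\xi)=2$. The strict, position-controlled excess over $2$ is the deep step, and your proposal contains no proof of it; filling it essentially requires re-importing Cassaigne's structure theorem plus the Berth\'e--Holton--Zamboni dichotomy, which is what the paper does.
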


Our proof of Theorem \ref{main} is essentially a combination of known results 
that rely on fine combinatorial properties of infinite words with a very low complexity. 
In particular, we use, in an essential way, a result due to Berth\'e, Holton and Zamboni 
\cite{BHZ} concerning initial repetitions occurring in Sturmian words.

\medskip

We derive the following consequences of Theorem \ref{main}. 

\begin{coro}\label{e}
For every integer $b\geq 2$, the number $e$ satisfies (\ref{complex}).
\end{coro}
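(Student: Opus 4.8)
The plan is to obtain Corollary~\ref{e} as an immediate application of Theorem~\ref{main}. Since $e$ is irrational, that theorem reduces the whole problem to the single assertion that the irrationality exponent of $e$ equals $2$. Thus everything rests on establishing
$$\mu(e) = 2.$$

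To prove this, I would invoke Euler's classical continued fraction expansion
$$e = [2; 1, 2, 1, 1, 4, 1, 1, 6, 1, 1, 8, \ldots],$$
in which the partial quotients $(a_n)_{n \geq 0}$ satisfy $a_n = O(n)$. Denoting by $p_n/q_n$ the associated convergents, the standard inequalities for continued fractions give $|e - p_n/q_n| \asymp 1/(a_{n+1} q_n^2)$, and since the convergents realize the best rational approximations to $e$ one obtains the well-known expression
$$\mu(e) = 2 + \limsup_{n \to \infty} \frac{\log a_{n+1}}{\log q_n}.$$

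First I would bound the denominators from below. Because every partial quotient is a positive integer, the sequence $(q_n)$ dominates a Fibonacci sequence, so $\log q_n \gg n$. On the other hand $\log a_{n+1} = O(\log n)$, and therefore the quotient $\log a_{n+1}/\log q_n$ tends to $0$. Hence the $\limsup$ above vanishes and $\mu(e) = 2$, as desired. Feeding this into Theorem~\ref{main} yields (\ref{complex}) for $\xi = e$ in every integer base $b \geq 2$.

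I do not expect a genuine obstacle here: the only external input is the exact shape of the continued fraction expansion of $e$, and the resulting estimate on $\mu(e)$ is then routine. The one point deserving care is the direction $\mu(e) \leq 2$, which truly requires that the convergents furnish the best approximations, so that no better rational approximation can exist; the cheap approximations coming from the partial sums of $\sum_{n \geq 0} 1/n!$ are far too weak for this purpose and must not be mistaken for good approximants.
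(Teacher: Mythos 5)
Your proposal is correct and follows exactly the paper's own route: invoke Euler's continued fraction expansion of $e$, apply the standard formula $\mu(\xi)=\limsup_{n\to\infty}\bigl(2+\log a_{n+1}/\log q_n\bigr)$, and conclude $\mu(e)=2$ from the polynomial growth of the partial quotients against the exponential growth of the denominators, then feed this into Theorem~\ref{main}. The only difference is that you spell out the routine estimates ($\log q_n \gg n$, $\log a_{n+1}=O(\log n)$) that the paper leaves implicit.
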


To our knowledge, this is the first example of a transcendental exponential period for which 
we can improve the bound of Theorem HM. 

The only property of the number $e$ used in the proof of Corollary \ref{e} is 
that $\mu(e)=2$, which follows from Euler's formula for the continued fraction expansion of 
$e$ (see the proof of Corollary \ref{e} in Section \ref{proofe}). 
Actually, many other examples 
of numbers involving the exponential function, trigonometric functions, or the modified 
Bessel function at rational arguments also have an irrationality exponent equal to $2$.  
In particular, the same conclusion holds in Corollary \ref{e}, if we replace the number $e$ 
by any of the following numbers (see \cite{Bundschuh,Tasoev}):  
$$\begin{array}{l}
e^{a},  \; a\in \mathbb Q, a\not= 0 ;\\ \\
\displaystyle \tan\left(\frac{1}{a}\right), \sqrt a \tan\left(\frac{1}{\sqrt a}\right), \frac{1}{\sqrt a} 
\tan\left(\frac{1}{\sqrt a}\right), \; a \in \mathbb N, a\not= 0 ; \\ \\
\displaystyle\tanh \left(\frac{2}{a}\right), \; a \in \mathbb N, a\not= 0 ;\\ \\
\displaystyle\sqrt{\frac{v}{u}}\tanh \left(\frac{1}{\sqrt{uv}}\right), \; u,v \in \mathbb N, uv\not= 0.
\end{array}
$$
Other interesting values covered by our approach are the numbers 
$$
\frac{ J_{(p/q)+1}(2/q)}{J_{p/q}(2/q)} \; \mbox{ and } \; \frac{ I_{(p/q)+1}(2/q)}{I_{p/q}(2/q)}, 
\; p/q \in \mathbb Q, 
 $$
where $\displaystyle J_{\lambda}(z)=\left(\frac{z}{2}\right)^{\lambda}\sum_{n= 0}^{+\infty}
\frac{(iz/2)^{2n}}{n! \; \Gamma(\lambda+n+1)}$ denotes the Bessel function 
of the first kind and $\displaystyle I_{\lambda}(z)=\sum_{n= 0}^{+\infty}
\frac{(z/2)^{\lambda + 2n}}{n! \; \Gamma(\lambda+n+1)}$ 
denotes the modified 
(or hyperbolic) Bessel function of the first kind (see for instance \cite{Komatsu}).   

Furthermore, multiplying any of these numbers by a nonzero rational and then 
adding a rational leads to a new example for which our result can be applied. We can also take the natural action of $\mbox{GL}_2(\mathbb Z)$ on any of these numbers. That is, starting from one of the 
above number $\xi$, the number $(a\xi+b)/(c\xi+d)$, where $\vert ad-bc\vert=1$, also satisfies 
the bound (\ref{complex}) of Theorem \ref{main}.

\medskip

Another consequence of our Theorem \ref{main} is that a real number with 
a bounded sequence of partial quotients in its continued fraction expansion 
cannot have base-$b$ expansions that are too simple. 

\begin{coro}\label{bad}
Let $b\geq 2$ be an integer and $\xi$ be an irrational  real number whose continued fraction 
expansion is $[a_0,a_1,a_2,\ldots]$. If the sequence of integers 
$(a_n)_{n\geq 0}$ is bounded, then 
$\xi$ satisfies (\ref{complex}).
\end{coro}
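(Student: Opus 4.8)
The plan is to reduce Corollary \ref{bad} to Theorem \ref{main} by showing that a real number with bounded partial quotients necessarily has irrationality exponent equal to $2$. This is a classical consequence of the theory of continued fractions, and the whole argument amounts to recalling the elementary inequalities that govern the quality of approximation of $\xi$ by its convergents.

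First I would fix the notation: write $p_n/q_n$ for the $n$-th convergent of $\xi=[a_0,a_1,a_2,\ldots]$ and let $M$ be an upper bound for the partial quotients, so that $a_n\leq M$ for all $n$. I would then recall the two standard facts about convergents. On the one hand, the recurrence $q_{n+1}=a_{n+1}q_n+q_{n-1}$ together with $q_{n-1}<q_n$ gives $q_{n+1}<(M+1)q_n$. On the other hand, the classical lower bound
$$
\left\vert \xi - \frac{p_n}{q_n}\right\vert > \frac{1}{q_n(q_{n+1}+q_n)}
$$
combined with the previous estimate yields
$$
\left\vert \xi - \frac{p_n}{q_n}\right\vert > \frac{1}{(M+2)\,q_n^2}.
$$
This quantitative statement is precisely the assertion that $\xi$ is badly approximable.

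Next I would translate this into a bound on the irrationality exponent. Suppose that $\rho>2$ and that $p/q$ is a rational satisfying $\vert \xi - p/q\vert < q^{-\rho}$. For $q$ large enough we have $q^{-\rho}<1/(2q^2)$, so by Legendre's theorem $p/q$ must be one of the convergents $p_n/q_n$. Comparing the two displayed inequalities then forces $q_n^{\rho-2}<M+2$, which bounds $q_n$ and hence leaves only finitely many possibilities for $p/q$. Therefore the inequality $\vert\xi-p/q\vert<q^{-\rho}$ has only finitely many solutions for every $\rho>2$, so $\mu(\xi)\leq 2$. Since the reverse inequality $\mu(\xi)\geq 2$ holds for every irrational number, we conclude that $\mu(\xi)=2$. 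Theorem \ref{main} then applies directly and shows that $\xi$ satisfies (\ref{complex}).

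There is no serious obstacle here: the only point requiring any care is the appeal to Legendre's theorem, which guarantees that all sufficiently good rational approximations of $\xi$ arise from convergents, so that the lower bound on $\vert\xi-p_n/q_n\vert$ can be leveraged against an \emph{arbitrary} approximation $p/q$ rather than just against the convergents themselves. The whole argument is self-contained modulo these classical facts and the already-established Theorem \ref{main}.
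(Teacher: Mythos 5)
Your proof is correct and takes essentially the same approach as the paper: both reduce Corollary \ref{bad} to Theorem \ref{main} via the classical fact that an irrational with bounded partial quotients has irrationality exponent $\mu(\xi)=2$. The only difference is that the paper cites this fact from Khintchine's book (Theorem 23, p.~36), whereas you prove it directly from Legendre's theorem and the standard convergent inequalities.
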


\section{Repetitions in infinite words with a low complexity}

We first introduce some notation from combinatorics on words.

\medskip

Let ${\cal A}$ be a finite set. The length of a word
$W$ on the alphabet ${\cal A}$, that is, the number of letters
composing $W$, is denoted by $\vert W\vert$. If $a$ is a letter and $W$ a finite word,  
then $\vert W\vert_a$ denotes the number of occurrences of the letter $a$ in $W$. 
For any positive integer $k$, we write
$W^k$ for the word 
$$\underbrace{W\cdots W}_{\mbox{$k$ times}}$$ 
(the concatenation
of the word $W$ repeated $k$ times). 
More generally, for any positive real number
$x$,  $W^x$ denotes the word
$W^{\lfloor x \rfloor}W'$, where $W'$ is the prefix of
$W$ of length $\left\lceil(x-\lfloor x\rfloor)\vert W\vert\right\rceil$. 
Here,  $\lfloor x \rfloor$ and 
$\lceil x\rceil$ denote the floor and ceiling functions, respectively. 

\medskip

We now consider two exponents that measure repetitions occurring in infinite words. 
They were introduced in \cite{BHZ} and \cite{AB3} (see also \cite{AC}), respectively. 
The first exponent,  the {\it initial critical exponent} 
of an infinite word ${\bf a}$, is defined as the supremum of 
all positive real numbers $\rho$ for which there exist arbitrarily long prefixes $V$ such that 
$V^{\rho}$ is also a prefix of ${\bf a}$. 
The second exponent, the {\it Diophantine exponent} 
of an infinite word ${\bf a}$, is defined as 
the supremum of the real numbers $\rho$ for which there exist arbitrarily long prefixes of ${\bf a}$ 
that can be factorized as $UV^w$, where $U$ and $V$ are two finite words 
($U$ possibly empty) and $w$ is a real number such that  
$$
\frac{\vert UV^w\vert}{\vert UV \vert} \geq \rho.
$$

\medskip

The initial critical exponent and the Diophantine exponent of ${\bf a}$ 
are respectively denoted by 
$\mbox{ice}({\bf a})$ and 
$\Dio({\bf a})$. Both exponents are clearly related by the following relation
\begin{equation}\label{icedio}
1 \leq \mbox{ice}({\bf a}) \leq \Dio({\bf a}) \leq +\infty.
\end{equation}

\medskip

Recall that the subword complexity function of an infinite word ${\bf a}=a_1a_2\cdots$ 
is the function   that associates  
with each positive integer $n$ the positive integer  
$$
p({\bf a},n) := \mbox{Card} \{(a_j,a_{j+1},\ldots ,a_{j+n-1}), \; j \geq 1\}.
$$

\medskip

We now prove the following result.

\begin{prop}\label{prop}
Let ${\bf a}$ be an infinite word such that the difference $p({\bf a},n) - n$ is bounded. Then  
$$
\Dio({\bf a}) > 2.
$$
\end{prop}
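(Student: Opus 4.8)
The plan is to show that a word with bounded $p(\mathbf a,n)-n$ is, up to a finite prefix, a non-erasing morphic image of a Sturmian word, and then to transfer the strong initial repetitions of Sturmian words established in \cite{BHZ} to the Diophantine exponent of $\mathbf a$. First I would dispose of the periodic case: if $\mathbf a$ is eventually periodic, then $p(\mathbf a,n)$ is bounded by Theorem HM, $\mathbf a$ has arbitrarily large initial powers of a fixed period, and $\Dio(\mathbf a)=+\infty>2$. So I may assume $\mathbf a$ aperiodic. Since $p(\mathbf a,n+1)-p(\mathbf a,n)\geq 1$ for all $n$ by Theorem HM, and
$$
p(\mathbf a,n)-n=\sum_{k<n}\bigl(p(\mathbf a,k+1)-p(\mathbf a,k)-1\bigr)+p(\mathbf a,1)-1 ,
$$
the boundedness of the left-hand side forces $p(\mathbf a,n+1)-p(\mathbf a,n)=1$ for all large $n$. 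Words with this property are exactly the quasi-Sturmian words, and it is classical that such a word can be written as $\mathbf a=U\,\sigma(\mathbf s)$, where $U$ is a finite word, $\sigma$ a non-erasing morphism, and $\mathbf s$ a Sturmian word. This is the structural input I would use.

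Next I would reduce the estimation of $\Dio(\mathbf a)$ to that of $\Dio(\mathbf s)$ by proving that neither prepending a finite word nor applying a non-erasing morphism can decrease the Diophantine exponent. For the prefix $U$: given a factorization $U'V^w$ of a long prefix of $\sigma(\mathbf s)$ with ratio close to $\Dio(\sigma(\mathbf s))$, the word $UU'V^w$ is a prefix of $\mathbf a$, and as the prefixes grow the fixed contribution $|U|$ becomes negligible, so the ratio $|UU'V^w|/|UU'V|$ tends to the same value; hence $\Dio(\mathbf a)\geq \Dio(\sigma(\mathbf s))$. For the morphism: starting from a factorization $UV^w$ of a prefix of $\mathbf s$, the image $\sigma(U)\sigma(V)^{w'}$ is a prefix of $\sigma(\mathbf s)$, and the key point is that the \emph{balance property} of Sturmian words gives $\bigl||X|_a-f_a|X|\bigr|\leq 1$ for every factor $X$ and letter $a$, where $f_a$ is the (well defined) frequency of $a$. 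Consequently $|\sigma(X)|=\lambda|X|+O(1)$ with $\lambda=\sum_a f_a|\sigma(a)|$, so the ratio $|\sigma(UV^w)|/|\sigma(UV)|$ converges to $|UV^w|/|UV|$ along any sequence of prefixes of growing length. This yields $\Dio(\sigma(\mathbf s))\geq \Dio(\mathbf s)$, and therefore $\Dio(\mathbf a)\geq \Dio(\mathbf s)$.

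It then remains to bound $\Dio(\mathbf s)$ from below. Here I would invoke the analysis of initial repetitions in Sturmian words carried out in \cite{BHZ}: every Sturmian word $\mathbf s$ has arbitrarily long prefixes of the form $V^{\rho}$ with $\rho>2$, that is, $\mbox{ice}(\mathbf s)>2$. Together with the inequality $\mbox{ice}(\mathbf s)\leq \Dio(\mathbf s)$ from (\ref{icedio}), this gives $\Dio(\mathbf s)>2$, and combined with the previous step it completes the proof that $\Dio(\mathbf a)>2$.

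The main obstacle is this last step, and more precisely the uniformity of the strict inequality $\mbox{ice}(\mathbf s)>2$ over all Sturmian words. The initial critical exponent depends delicately on both the slope and the intercept of $\mathbf s$ — shifting the intercept can shorten the initial repetitions — so one cannot simply reduce to the characteristic Sturmian word; controlling the exact length of the longest initial period-$q_n$ repetition for an arbitrary intercept is exactly the content I would extract from \cite{BHZ}. A secondary technical point is to make the balance estimate in the morphism step fully rigorous, in particular to ensure that the prefixes realizing $\Dio(\mathbf s)$ may be taken with $|V|\to\infty$, so that the $O(1)$ error terms are genuinely negligible.
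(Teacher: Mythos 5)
Your reduction steps are sound and essentially identical to the paper's: you dispose of the eventually periodic case, use Theorem HM to get $p(\mathbf{a},n)=n+k$ for all large $n$, invoke Cassaigne's structure theorem to write $\mathbf{a}=U\sigma(\mathbf{s})$ with $\mathbf{s}$ Sturmian, and transfer the Diophantine exponent through the prefix and the morphism (your balance argument is a legitimate variant of the paper's frequency argument, and even slightly sharper, giving $O(1)$ rather than $o(n)$ error terms). The gap is in the final step: the claim that \cite{BHZ} yields $\mbox{ice}(\mathbf{s})>2$ for \emph{every} Sturmian word $\mathbf{s}$. That is not what \cite{BHZ} prove, and it is not true in general. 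The initial critical exponent is sensitive to the intercept as well as to the slope: when the slope has unbounded partial quotients, the intercept can spoil the long repetitions occurring at the very beginning of the word, and no lower bound of the form $\mbox{ice}(\mathbf{s})>2$ is available. What Propositions 5.1 and 5.2 of \cite{BHZ} actually give is a bound of the shape
$$
\mbox{ice}(\mathbf{s}) \geq 2+\frac{1}{2(M+1)^2+1},
$$
valid for every intercept but only under the hypothesis that suitable patterns of partial quotients of the slope (a pair $(s,t)$ with $s\geq 2$, or the triple $(1,1,1)$) occur infinitely often --- a hypothesis which the pigeonhole principle guarantees when the partial quotients are bounded by $M$, and whose conclusion degenerates to $2$ as $M\to\infty$. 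So your single appeal to \cite{BHZ} only covers slopes with bounded partial quotients.

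Note also that the ``uniformity over all Sturmian words'' you single out as the main obstacle is beside the point: since the transfer gives $\Dio(\mathbf{a})\geq\Dio(\mathbf{s})$ with no loss, a pointwise inequality $\Dio(\mathbf{s})>2$ would suffice. The real issue is that this pointwise inequality cannot be obtained through $\mbox{ice}$ alone. The paper closes the missing case --- slope with unbounded partial quotients --- by switching exponents: Proposition 11.1 of \cite{AB2} shows that a Sturmian word whose slope has unbounded partial quotients satisfies $\Dio(\mathbf{s})=+\infty$. This is precisely where the extra freedom in the definition of the Diophantine exponent (the repetition $V^w$ may be preceded by an arbitrary prefix $U$) pays off: for such slopes, enormous repetitions occur \emph{somewhere} early in the word even if the intercept prevents them from occurring at position $0$. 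Your argument becomes a complete proof once you add this case distinction: bounded partial quotients handled via \cite{BHZ} as you do, unbounded partial quotients handled via $\Dio(\mathbf{s})=+\infty$ from \cite{AB2}.
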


\begin{proof}
Our first step is a reduction argument that was previously outlined by Allouche in a similar 
context \cite{Al}. More precisely, we are going to prove that it is sufficient  
for our purpose to focus on the Diophantine exponent of Sturmian words.  
Sturmian words are defined as the binary words ${\bf s}$ for which   
$p({\bf s},n)=n+1$ for every positive integer $n$. 
Recall that the set of Sturmian words is uncountable. 

From now on, we fix an infinite word ${\bf a} = a_1a_2\cdots$ defined over a finite alphabet 
${\cal A}$ and such that the difference $p({\bf a},n) -n$ is bounded. 
If ${\bf a}$ is eventually periodic, it is easily checked that $\Dio({\bf a})$ is infinite. 
We can thus assume that ${\bf a}$ is aperiodic.  
Theorem HM thus implies that the subword complexity function of 
${\bf a}$ is increasing.  
Consequently, the difference $p({\bf a},n) -n$ is a nondecreasing sequence of bounded integers. 
Such sequence is eventually constant and there 
thus exists two positive integers $k$ and $n_0$ such that 
\begin{equation}\label{quasisturm}
p({\bf a},n) = n + k, \;\; \forall n\geq n_0.
\end{equation}
By a result of Cassaigne \cite{Cas}, an infinite word ${\bf a}$ satisfies Equality (\ref{quasisturm}) 
if and only if there exist a finite word $W$, a Sturmian infinite word ${\bf s}$ defined over $\{0,1\}^*$ 
and a nonerasing morphism $\varphi$ from the free monoid $\{0,1\}^*$ into ${\cal A}^*$ such that 
\begin{equation}\label{decomp}
{\bf a} = W \varphi({\bf s}).
\end{equation}
Our infinite word ${\bf a}$ thus has such a decomposition and we claim that  
\begin{equation}\label{sturm}
\Dio({\bf a}) \geq \Dio({\bf s}).
\end{equation}
Set ${\bf s} = s_1s_2\cdots$. 
To prove that (\ref{sturm}) holds, we only need a classical property of Sturmian words: 
each of the two letters occurring in a Sturmian word has a frequency. 
This means that there exists a real number $\alpha$ in $(0,1)$ 
such that 
$$
\lim_{n \to \infty} \frac{\vert s_1s_2\cdots s_n\vert_1}{n} = \alpha,
$$  
and consequently, 
$$
\lim_{n \to \infty} \frac{\vert s_1s_2\cdots s_n\vert_0}{n} = 1 - \alpha.
$$
The number $\alpha$ is always irrational and is termed the slope of ${\bf s}$. 
It follows that  
\begin{equation}\label{o}
\vert \varphi(s_1s_2\cdots s_n) \vert = \delta n + o(n),
\end{equation}
where $\delta := (\alpha \vert \varphi(a)\vert + (1-\alpha) \vert \varphi(b) \vert)$. 
Here and in the sequel, $o$ stands for the usual Landau notation.

\medskip

Now, let us assume that $\Dio({\bf s}) = \rho$ for some positive real number $\rho$, 
and let $\varepsilon$ be a positive real 
number. By definition of the Diophantine 
exponent, there exists an infinite sequence of prefixes of ${\bf s}$ that can be factorized as 
$U_nV_n^{w_n}$, with $\vert U_nV_n^{w_n} \vert / \vert U_nV_n \vert > \rho -\varepsilon$ and 
such that the sequence $(\vert U_nV_n \vert)_{n\geq 1}$ increases. 
We then infer from (\ref{decomp}) that ${\bf a}$ begins with the word 
$$
W\varphi(U_nV_n^{w_n}).
$$
Set $A_n = W \varphi(U_n)$ and $B_n = \varphi(V_n)$. 
There thus exists a positive real number $r_n$ such that 
$$
W\varphi(U_nV_n^{w_n}) = A_nB_n^{r_n}.
$$
Since $\vert U_nV_n \vert$ can be chosen arbitrarily large, we infer from Equality (\ref{o}) that 
$$
\vert W \varphi(U_nV_n^{w_n})\vert = \delta \vert U_nV_n^{w_n} \vert + o(\vert U_nV_n^{w_n} \vert)
$$
and 
$$
\vert \varphi(U_nV_n) \vert = \delta \vert U_nV_n \vert + o(\vert U_nV_n\vert).
$$
Consequently, 
$$
\vert A_nB_n^{r_n} \vert / \vert A_nB_n \vert \geq \rho - 2\varepsilon,
$$
for every $n$ large enough. This shows that 
\begin{equation}\label{dio}
\Dio({\bf a}) \geq \rho = \Dio({\bf s}) 
\end{equation}
as desired.

\medskip

We have now to distinguish two cases depending on 
the Diophantine properties of the slope $\alpha$ of the Sturmian words ${\bf s}$.  
Let us denote by $[0,m_1,m_2,\ldots]$ the continued fraction expansion of $\alpha$. 

First,  let us assume that $\alpha$ has a bounded sequence of partial quotients, 
say bounded by a positive integer $M$. Then there are only a finite number of distinct  
pairs $(a_j,a_{j+1})$ and {\it a fortiori} of distinct triples $(a_j,a_{j+1},a_{j+2})$. 
By the pigeonhole principle, there exist either a pair of integers 
$(s,t)$, $2\leq s\leq M$, $1\leq t \leq M$, such that $(a_j,a_{j+1})=(s,t)$ for infinitely many indices $j$, 
or there exist infinitely many indices $j$ such that 
$(a_j,a_{j+1},a_{j+2}) = (1,1,1)$.  
In all cases, mixing Propositions 5.1 and 5.2 of \cite{BHZ} we get that  
$$
\mbox{ice}({\bf s}) \geq 2  + \frac{1}{2(M+1)^2+1} >2.
$$
We thus infer from Inequality (\ref{icedio}) that 
\begin{equation}\label{dio2}
\Dio({\bf s}) >2.
\end{equation}

On the other hand, if $\alpha$ has an unbounded 
sequence of partial quotients,  it is shown in Proposition 11.1 of \cite{AB2} that 
\begin{equation}\label{dio1}
\Dio({\bf s}) = +\infty.
\end{equation}

To sum up, (\ref{dio}), (\ref{dio2}) and (\ref{dio1}) give 
$$
\Dio({\bf a}) > 2,
$$
concluding the proof. 
\end{proof}

\section{Diophantine exponent and rational approximations}\label{proofe}

We now briefly recall some interplay between the Diophantine exponent and 
the irrationality exponent that can be found in \cite{AB2}.  
Let $\xi$ be a real number whose base-$b$ expansion is 
$0{\scriptscriptstyle \bullet} a_1a_2\cdots$. Set ${\bf a}=a_1a_2\cdots$. 
Then the Diophantine exponent of ${\bf a}$ and the irrationality exponent of $\xi$ are 
linked by the following inequality:
\begin{equation}\label{diorat}
\Dio({\bf a}) \leq \mu(\xi).
\end{equation}
Indeed, let us assume that the word ${\bf a}$ begins with a prefix of the form $UV^w$.
Set $q = b^{\vert U\vert}(b^{\vert V\vert}-1)$. 
A simple computation shows that there exists an integer $p$ such that 
$$
p/q = 0{\scriptscriptstyle \bullet} UVVV\cdots.
$$  
Since $\xi$ and $p/q$ have the same first $\vert UV^w \vert$ digits, 
we obtain that 
$$
\left\vert \xi - \frac{p}{q} \right\vert < \frac{1}{b^{\vert UV^w\vert}}
$$
and thus 
\begin{equation}\label{min}
\left\vert \xi - \frac{p}{q} \right\vert < \frac{1}{q^{\rho}},
\end{equation}
where $\rho = \vert UV^w\vert/\vert UV\vert$. 
We do not claim here that $p/q$ is written in lowest terms.  Actually, it may well happen that the 
$\gcd$ of $p$ and $q$ is quite large but (\ref{min}) still holds in that case.
Inequality (\ref{diorat}) then follows directly from the definition of both exponents.

\medskip

We are now ready to conclude the proof of our main results.

\begin{proof}[Proof of Theorem \ref{main}]  
It is a straightforward consequence of Proposition \ref{prop} and Inequality (\ref{diorat}). 
\end{proof}

\begin{proof}[Proof of Corollary \ref{e}]  
It is known after Euler\footnote{Actually, this formula seems to have been discovered first by R. Cotes; 
Euler would be the first to give a proof.} 
that the 
continued fraction expansion of $e$ has very special patterns; namely 
\begin{equation}\label{euler}
e = [2,1,2,1,1,4,1,1,6,1,1,8,\ldots,1,1,2n,1,1\ldots].
\end{equation}
From Euler's formula, we can easily derive that  
the irrationality exponent of $e$ satisfies $\mu(e) = 2$, concluding the proof. 
Indeed, if $q_n$ and $a_n$ respectively denote the $n$-th convergent and the $n$-th partial 
quotient of a real number $\xi$, the theory of continued fractions ensures that
$$
\mu(\xi) = \limsup_{n\to\infty} \left(2 + \frac{\log a_{n+1}}{\log q_n}\right).
$$ 
\end{proof}

\begin{proof}[Proof of Corollary \ref{bad}]  
 A basic result from the theory of continued fractions ensures that 
 $\mu(\xi)=2$ when $\xi$ has a bounded sequence of partial quotients in its continued fraction 
 expansion (see for instance \cite{Khintchine}, Theorem 23, Page 36). 
\end{proof}

\section{Comments}\label{last}

We end this note with few comments.

\begin{rem}\label{FeMa} As already mentioned, the main ingredient 
in the proof of Theorem FM is a $p$-adic version of Roth's theorem due to Ridout. 
Incidentally, our proof of Theorem \ref{main} 
provides a new proof of Theorem FM. 
In particular, this shows that it   can be obtained with the use of 
Roth's theorem only. That is, $p$-adic considerations are unnecessary to prove Theorem FM.  

However, the use of some $p$-adic information by Ferenczi and Mauduit in \cite{FM} turned out to be 
of great importance since it led to the main lower bound for the complexity of 
algebraic irrational numbers obtained in \cite{AB1}.
\end{rem}

\begin{rem}
It is likely that Proposition \ref{prop} could be improved to  
\begin{equation}\label{dioconj}
\Dio({\bf a}) \geq \frac{3+\sqrt 5}{2}.
\end{equation} 
This result would be optimal since the bound is reached for the Fibonacci word, which is 
the most famous example among Sturmian words. We note that, as a consequence of 
irrationality measures obtained by Baker in \cite{Ba}, Inequality (\ref{dioconj})  
would permit us to show that the conclusion of Corollary \ref{e} still holds if we replace 
the number $e$ by $\log (1 + 1/n)$, for every integer $n\geq 68$. 
However, our approach would not permit us to deduce  
a new lower bound for the complexity of expansions in integer bases of 
periods like $\pi$, $\log 2$ or 
$\zeta(3)$ from (\ref{dioconj}) since the best known upper bounds for the irrationality exponent of 
these numbers are all larger than $(3+\sqrt 5)/2$.
\end{rem}

\begin{rem} 
As a limitation of our approach, we quote that there are  
real numbers with a low complexity in some integer base but with 
an irrationality exponent equal to $2$. 
For instance, it was proved in  \cite{Sh} that the binary number 
$$
\xi := \sum_{n\geq 0}\frac{1}{2^{2^n}}
$$
has bounded partial quotients in its continued fraction expansion, 
while a classical theorem of Cobham implies that $p(\xi,2,n) =O(n)$ 
(see for instance Corollary 10.3.2 of \cite{AS}, Page 304). 
Actually, one can deduce from the proof of Lemma 2.4 in \cite{Gh} 
the more precise upper bound $p(\xi,2,n) \leq (2+\ln 3)n + 4$, 
for every positive integer $n$. 
\end{rem}

\begin{rem}
It would also be very interesting to investigate the complexity of expansions of $e$ 
and other contants from a computational point of view. In this direction, we pose 
the following  open question. Note that similar open questions were posed by Allouche and Shallit 
\cite{AS}, Page 402, Problems 3 and 4.

\medskip

\noindent
{\bf \itshape Problem.}\,\,--- {\it Prove that the decimal expansion of $e$ cannot be produced by a 
finite automaton. }
\end{rem}

\bigskip

\noindent{\bf \itshape Acknowledgements.}\,\,--- 
The author thanks the ANR for its support through the project ``DyCoNum"--JCJC06 134288. 
He is grateful to Yann Bugeaud 
and Michel Waldschmidt for their comments on an earlier draft of this note.  
The author is also indebted to Tanguy Rivoal for suggesting relevant references and to 
the anonymous referee for his very careful reading that helped 
him to improve the presentation of the present work. 

\end{document}